\newtheorem{theorem}{Theorem}[section]
\newtheorem{lemma}{Lemma}[section]
\theoremstyle{definition}
\newtheorem{definition}{Definition}[section]
\theoremstyle{definition}
\theoremstyle{definition}
\newtheorem{example}{Example}[section]
\theoremstyle{definition}
\newtheorem{prop}{Proposition}[section]
\theoremstyle{definition}
\newtheorem{remark}{Remark}[section]
\tikzset{join/.code=\tikzset{after node path={%
			\ifx\tikzchainprevious\pgfutil@empty\else(\tikzchainprevious)%
			edge[every join]#1(\tikzchaincurrent)\fi}}}
\tikzset{>=stealth',every on chain/.append style={join},
	every join/.style={->}}
\tikzstyle{labeled}=[execute at begin node=$\scriptstyle,
\title{Irreducible representation of Plesken Lie algebras}
\author{S. N. Arjun\footnote{Corresponding author} \ and P. G. Romeo}
\date{}
\begin{document}
 \maketitle
 \begin{center}
 	 \textit{Department of Mathematics},\\ \textit{Cochin University of Science and Technology},\\
	\textit{Kochi, Kerala, India, 682022}\\
	\textit{arjunsnmaths1996@gmail.com} \\
	\textit{romeo\_parackal@yahoo.com}
 \end{center}
\begin{abstract}
	In \cite{Plesken}, Cohen and Taylor introduced Plesken Lie algebra as certain Lie algebra constructed using finite groups. Arjun and Romeo described the linear representation of these Lie algebras induced from group representation in \cite{Romeo}. Hence the authors posed the question as to what are the irreducible representations of Plesken Lie algebras and describes the irreducible representations of cover of Plesken Lie algebras.
\end{abstract}
Keywords: Lie algebra, Cover, Linear representation, Projective representation.
AMS 2020 Subject classifications: 17B10, 17B56, 20C25.
\section{Introduction}
Lie groups, Lie algebras, and their representations constitute a fascinating field of study with a wide array of practical applications. The Lie groups has its roots in the work of Sophus Lie, who studied certain transformation groups that are now known as Lie groups. The Lie algebras are certain vector spaces that possess the Lie bracket operation, which satisfies specific algebraic properties. Moreover, a significant Lie group-Lie algebra correspondence enables the interrelation between a Lie group and its corresponding Lie algebra, or vice versa. The Lie algebra, being a linear object, is more immediately accessible than the group.
\par A few years back, W. Plesken and Arjeh M. Cohen constructed a Lie algebra from a finite group, and it is known as Plesken Lie algebra. A Plesken Lie algebra denoted $\mathcal{L}(G)$ of a  group $G$ over $\mathbb{F}$   is the linear span of elements $\hat{g} = g-{g}^{-1} \in \mathbb{F}G$ together with the Lie bracket $[\hat{g}, \hat{h}] = \hat{g}\hat{h} - \hat{h}\hat{g}$. Subsequently, Cohen and Taylor discussed the structure of Plesken Lie algebras and explicitly determine the groups for which the Plesken Lie algebra simple and semisimple over complex field. More recently, Romeo and Arjun discussed the representations of Plesken Lie algebras, Plesken Lie algebra modules and the Plesken Lie algebra representations induced from group representation. Further they determined irreducible Plesken Lie algebra representations.
\par The Plesken Lie algebra representations which they found in \cite{Romeo} are the representations of $\mathcal{L}(G)$ which are linearly extended from the representations of $G$. One issue they encounter is that preservation of irreducibility is not guaranteed. Thus our aim is to find the irreducible representations of $\mathcal{L}(G)$. We partially addressed this issue using the concept of projective representations, leading to the establishment of the following theorem.
\begin{theorem}
Let $\mathcal{E}$  a finite dimensional Lie algebra  and $\mathcal{L}(H)$ be a central subalgebra of $\mathcal{E}$ such that $\mathcal{E}$ is a cover of $\mathcal{E}/\mathcal{L}(H)$. Then there is a bijection between the sets $Irr(\mathcal{E})$ and $\displaystyle\bigcup_{[\alpha]\in H^2(\mathcal{E}/\mathcal{L}(H), \mathbb{C})} Irr^{\alpha}(\mathcal{E}/\mathcal{L}(H))$.
\end{theorem}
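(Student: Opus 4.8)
The plan is to reproduce, in the Lie-algebra setting, the classical correspondence between the ordinary irreducible representations of a representation group (Schur cover) and the projective irreducible representations of its quotient. Write $\mathcal{Q} = \mathcal{E}/\mathcal{L}(H)$ and let $\pi : \mathcal{E} \to \mathcal{Q}$ be the canonical projection. The starting observation is that, because $\mathcal{L}(H)$ is central, Schur's lemma forces $\mathcal{L}(H)$ to act by scalars in any $\rho \in Irr(\mathcal{E})$: for $z \in \mathcal{L}(H)$ the operator $\rho(z)$ commutes with all of $\rho(\mathcal{E})$, hence is an $\mathcal{E}$-module endomorphism, and therefore equals $\lambda_\rho(z)\,\mathrm{Id}$ for a linear functional $\lambda_\rho \in \mathcal{L}(H)^{\ast}$, the central character of $\rho$. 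This partitions $Irr(\mathcal{E})$ according to the value of $\lambda_\rho$, and the whole argument will proceed one central character at a time.

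Next I would fix a vector-space section $s : \mathcal{Q} \to \mathcal{E}$ of $\pi$ and record the associated factor set $f : \mathcal{Q} \times \mathcal{Q} \to \mathcal{L}(H)$ defined by $f(x,y) = [s(x),s(y)] - s([x,y])$, which is the defining $2$-cocycle of the central extension $0 \to \mathcal{L}(H) \to \mathcal{E} \to \mathcal{Q} \to 0$. Given $\rho \in Irr(\mathcal{E})$ with central character $\lambda = \lambda_\rho$, I set $\sigma = \rho \circ s$ and compute, using the displayed relation for $f$ together with $\rho|_{\mathcal{L}(H)} = \lambda\,\mathrm{Id}$, that $\sigma([x,y]) = [\sigma(x),\sigma(y)] + \alpha(x,y)\,\mathrm{Id}$ with $\alpha = -\,\lambda \circ f$. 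Being the composite of a linear functional with a $2$-cocycle, $\alpha$ is itself a scalar $2$-cocycle on $\mathcal{Q}$, so $\sigma$ is an $\alpha$-projective representation; moreover $\sigma$ and $\rho$ have exactly the same invariant subspaces, since the only extra operators $\rho(z)$ are scalars, whence $\sigma \in Irr^{\alpha}(\mathcal{Q})$. Replacing $s$ by another section changes $f$, and hence $\alpha$, only by a coboundary and changes $\sigma$ only by an equivalence, so $\rho \mapsto \sigma$ gives a well-defined map into $\bigcup_{[\alpha]} Irr^{\alpha}(\mathcal{Q})$, landing in the block indexed by the class $[-\lambda\circ f]$.

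For the inverse I would start from an $\alpha$-projective $\sigma \in Irr^{\alpha}(\mathcal{Q})$, use the cover hypothesis (below) to write $[\alpha] = [-\lambda \circ f]$ for a suitable $\lambda \in \mathcal{L}(H)^{\ast}$, and, after adjusting $\sigma$ within its equivalence class by the relevant coboundary, assume $\alpha = -\lambda \circ f$ on the nose. Using the vector-space decomposition $\mathcal{E} = s(\mathcal{Q}) \oplus \mathcal{L}(H)$ I define $\rho(s(x)+z) = \sigma(x) + \lambda(z)\,\mathrm{Id}$; a direct bracket computation, invoking that $\mathcal{L}(H)$ is central and the $\alpha$-projectivity of $\sigma$, shows that $\rho$ is a genuine Lie algebra homomorphism with central character $\lambda$, and it is irreducible precisely because $\sigma$ is. These two assignments are mutually inverse up to equivalence, so for each fixed $\lambda$ they give a bijection $Irr_\lambda(\mathcal{E}) \leftrightarrow Irr^{\alpha}(\mathcal{Q})$ between the irreducibles of central character $\lambda$ and the $\alpha$-projective irreducibles of $\mathcal{Q}$ for $\alpha = -\lambda\circ f$.

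The crux — and the step I expect to be the main obstacle — is showing that as $\lambda$ ranges over $\mathcal{L}(H)^{\ast}$ the classes $[-\lambda\circ f]$ range bijectively over all of $H^2(\mathcal{Q},\mathbb{C})$. This is exactly where the hypothesis that $\mathcal{E}$ is a cover of $\mathcal{Q}$ must be used: for a cover the transgression map $\mathcal{L}(H)^{\ast} \to H^2(\mathcal{Q},\mathbb{C})$, $\lambda \mapsto [\lambda\circ f]$, is an isomorphism, equivalently $\mathcal{L}(H) \cong H^2(\mathcal{Q},\mathbb{C})$ and the extension is a stem extension realizing the full multiplier. Granting this, the partition $Irr(\mathcal{E}) = \bigsqcup_{\lambda} Irr_\lambda(\mathcal{E})$ from the first paragraph matches the union $\bigcup_{[\alpha] \in H^2(\mathcal{Q},\mathbb{C})} Irr^{\alpha}(\mathcal{Q})$ block by block under the correspondence $\lambda \leftrightarrow [-\lambda\circ f]$, and the per-block bijections of the previous paragraph assemble into the desired global bijection. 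The remaining verifications — independence of the construction under change of section and the preservation of invariant subspaces in both directions — are routine and would be dispatched along the way.
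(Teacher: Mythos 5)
Your proposal is correct and follows essentially the same route as the paper: both directions pass through a vector-space section $s$ of the quotient map, the associated factor set $f(x,y)=[s(x),s(y)]-s([x,y])$, and the formula $\rho(s(x)+z)=\sigma(x)+\lambda(z)\,I_V$, which is exactly the construction underlying the paper's Theorem \ref{linearrepcover} and Theorem \ref{projrepplesken}. If anything your version is the more complete one, since it makes explicit two points the paper glosses over --- that Schur's lemma supplies the central character needed to pass from an irreducible linear representation of $\mathcal{E}$ to a projective representation of the quotient, and that the cover hypothesis is precisely what makes the transgression $\lambda\mapsto[\lambda\circ f]$ a bijection onto $H^2(\mathcal{E}/\mathcal{L}(H),\mathbb{C})$, without which the union over all $[\alpha]$ would not be exhausted.
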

where $Irr(\mathcal{E})$ denotes the set of irreducible linear representations of $\mathcal{E}$ and
 \\
 $Irr^{\alpha}(\mathcal{E}/\mathcal{L}(H))$ is the set of irreducible $\alpha$-representations of $\mathcal{E}/\mathcal{L}(H)$ where $\alpha$ ranges over $H^2(\mathcal{E}/\mathcal{L}(H), \mathbb{C})$.
\par The theory of projective representations involves understanding homomorphisms from a Lie algebra to the quotient Lie algebra $\mathfrak{gl}(V)/\{kI_V: k \in \mathbb{C} \}$, called the projective Lie algebra. These representations appear in the study of linear representation of Lie algebras as in the above theorem. By definition, every linear representation of a Lie algebra is projective, but the converse is not true. Therefore, understanding the projective representation is more more complicated. Recall, the Schur multilplier of a finite dimensional Lie algbera $\mathcal{L}$ is the second cohomology group $H^2(\mathcal{L}, \mathbb{C})$, where $\mathbb{C}$ is a trivial $\mathcal{L}$-module(cf. \cite{Batten}). The Schur multilplier plays an important role in understanding projective representations. By definition, every projective representation $\Phi$ of $\mathcal{L}(G)$ is associated with a 2-cocycle $\alpha : \mathcal{L}(G) \times \mathcal{L}(G) \to \mathbb{C}$ such that $[\Phi(x), \Phi(y)] = \alpha(x,y)I_V + \Phi([x, y])$ for all $x, y \in \mathcal{L}(G)$.  In this case, we say that $\Phi$ is an $\alpha$-representation. Conversely, for every 2-cocycle $\alpha$ of $\mathcal{L}(G)$, we can find an $\alpha$-representation of $\mathcal{L}(G)$. Thus the first thing to understand the projective representation is to describe the 2-cocycles of $\mathcal{L}(G)$ upto cohomologous, that is, to understand the Schur multiplier of $\mathcal{L}(G)$ and then constructing $\alpha$-representations of $\mathcal{L}(G)$ for all $[\alpha] \in H^2(\mathcal{L}(G), \mathbb{C})$, where $[\alpha]$ denotes the cohomology class of $\alpha$. All these observations and results are described in Section \ref{projsec}.

\par The next interesting thing is that the central extensions play an important role in the study of projective representations of a Lie algebra. The key concept that appear in the context is cover.  The cover of a Lie algebra introduced and described by P. G. Batten in her Ph. D Thesis (\cite{Batten}).
Section 3 provides an exposition on the cover of a Plesken Lie algebra and established a one-to-one correspondence between irreducible linear representations of the cover of $\mathcal{L}(G)$ and irreducible projective representations of $\mathcal{L}(G)$.

 Throughout this paper $\mathcal{L}(G)$ denote the Plesken Lie algebra of a finite group $G$ over the field of complex numbers $\mathbb{C}$ and $V$ is a finite dimensional complex vector space, otherwise stated.\\

 \section{Projective representations of $\mathcal{L}(G)$}\label{projsec}
 In this section, we will introduce the projective representation of Plesken Lie algebras and establish it's relation between the second cohomology group.
 \par A \textit{projective representation} of a Plesken Lie algebra $\mathcal{L}(G)$ is a Lie algebra homomorphism $\phi: \mathcal{L}(G) \to \mathfrak{pgl}(V)$ (for some finite dimensional vector space $V$), where $\mathfrak{pgl}(V)$ is the quotient Lie algebra $\mathfrak{gl}(V)/\{kI_V: k \in \mathbb{C} \}$.
\par Every linear representation of $\mathcal{L}(G)$ is a projective representation. For, consider a linear representation $\rho : \mathcal{L}(G) \to \mathfrak{gl}(V)$  of $\mathcal{L}(G)$ and the natural homomorphism $\pi : \mathfrak{gl}(V) \to \mathfrak{pgl}(V)$, then the composition $\pi \circ \rho  : \mathcal{L}(G) \to  \mathfrak{pgl}(V)$ is a projective representation.
\par The following proposition gives a characterization for the projective representation of Lie algebras.

\begin{prop}\cite{Arjun}
	Let  $G$ be a finite group and $\phi$ a projective representation  of $\mathcal{L}(G)$ on $V$. Then there is a linear map $\Phi : \mathcal{L}(G) \to \mathfrak{gl}(V)$ and a bilinear map $\mu :\mathcal{L}(G)\times \mathcal{L}(G) \to \mathbb{C}$ such that
	\begin{equation}
		[\Phi(\hat{x}),\Phi(\hat{y})]=\mu(\hat{x}, \hat{y})I_V+\Phi([\hat{x}, \hat{y}]) \ \text{ for all }\ \hat{x}, \hat{y} \in \mathcal{L}(G).
	\end{equation}
	Conversely, if there is a linear map $\Phi$ and a bilinear map $\mu$ satisfying $(1)$, then  $\pi \circ \Phi : \mathcal{L}(G) \to \mathfrak{pgl}(V)$  where $\pi : \mathfrak{gl}(V) \to \mathfrak{pgl}(V)$ is the canonical homomorphism, is a projective representation of $\mathcal{L}(G)$. 
\end{prop}
The details are given by the following commutative diagram.
\[
  \begin{tikzcd}
    \mathcal{L}(G) \arrow{r}{\Phi} \arrow[swap]{dr}{\phi} & \mathfrak{gl}(V) \arrow{d}{\pi} \\
     & \mathfrak{pgl}(V)
  \end{tikzcd}
\]

Now we can define the projective representation of $\mathcal{L}(G)$ as follows:
\begin{definition}
	A linear map $\Phi: \mathcal{L}(G) \to \mathfrak{gl}(V)$ together with a bilinear map $\mu :\mathcal{L}(G) \times \mathcal{L}(G) \to \mathbb{C}$ satisfying the condition
	\begin{center}
		$[\Phi(\hat{x}),\Phi(\hat{y})]= \mu(\hat{x}, \hat{y})I_V + \Phi([\hat{x},\hat{y}])$.
	\end{center}
	is called a projective representation of $\mathcal{L}(G)$.
\end{definition}
Next we proceed to explore the connection between projective representations and cohomology groups.
\par For, recall the  second cohomology group of a finite dimensional Lie algebra (cf.\cite{Batten}). Let $\mathcal{L}(G)$ be a Plesken Lie algebra of $G$ over $\mathbb{C}$. The set of 2-cocycles is given by
\begin{center}
	$Z^2(\mathcal{L}(G), \mathbb{C}) = \{ f: \mathcal{L}(G) \times  \mathcal{L}(G) \to \mathbb{C} : f \text{ is bilinear and } f([\hat{x}, \hat{y}], \hat{z})$\\\ \hspace{1in} $+f([\hat{y}, \hat{z}], \hat{x})+f([\hat{z},\hat{x}],\hat{y}) = 0  \ \forall \hat{x}, \hat{y}, \hat{z} \in \mathcal{L}(G)\}$
\end{center}
and the set of 2-coboundaries are 
\begin{center}
	$B^2(\mathcal{L}(G), \mathbb{C})=\{ f: \mathcal{L}(G) \times  \mathcal{L}(G) \to \mathbb{C} : f \text{ is bilinear and there exists  } $\\ \hspace{1in} $\sigma : \mathcal{L}(G) \to \mathbb{C} \text{ such that } f(\hat{x}, \hat{y})= -\sigma([\hat{x}, \hat{y}]) \}$.
\end{center}
Then the second cohomology group of $\mathcal{L}(G)$ is given by
\begin{center}
	$H^2(\mathcal{L}(G), \mathbb{C}) = \frac{Z^2(\mathcal{L}(G), \mathbb{C})}{B^2(\mathcal{L}(G), \mathbb{C})}.$
\end{center}
 and is called the $multiplier$ of $\mathcal{L}(G)$ (see \cite{Batten}).
\par Note that two 2-cocycles $\alpha_1$ and $\alpha_2$ are said to be $cohomologous$ (ie., they have the same cohomology class) if there exists a linear map $\sigma : \mathcal{L}(G) \to  \mathbb{C}$ such that
\begin{center}
	$\mu_2(\hat{x}, \hat{y}) - \mu_1(\hat{x}, \hat{y}) = -\sigma([\hat{x}, \hat{y}])$
\end{center}
 Let $\Phi: \mathcal{L}(G)\to \mathfrak{gl}(V)$ a projective representation , then for any $\Phi(\hat{x}),\Phi(\hat{y}), \Phi(\hat{z}) \in \mathfrak{gl}(V)$,
\begin{equation}
	[[\Phi(\hat{x}),\Phi(\hat{y})],\Phi(\hat{z})]+ [[\Phi(\hat{y}),\Phi(\hat{z})],\Phi(\hat{x})]+ [[\Phi(\hat{z}),\Phi(\hat{x})],\Phi(\hat{y})]= 0
\end{equation}
(by Jacobi identity in $\mathfrak{gl}(V)$),
\begin{equation}
	\begin{split}
		[[\Phi(\hat{x}),\Phi(\hat{y})],\Phi(\hat{z})]&= [\mu(\hat{x},\hat{y})I_V+\Phi([\hat{x}, \hat{y}]), \Phi(\hat{z})]\\
		&= \mu(\hat{x}, \hat{y})[I_V, \Phi(\hat{z})]+ [\Phi([\hat{x}, \hat{y}]), \Phi(\hat{z})]\\
		&= [\Phi([\hat{x}, \hat{y}]), \Phi(\hat{z})]\\
		&= \mu([\hat{x},\hat{y}],\hat{z})I_V+ \Phi([[\hat{x},\hat{y}],\hat{z}])
	\end{split}
\end{equation}
and 
\begin{equation}
	\begin{split}
		[[\Phi(\hat{x}),\Phi(\hat{y})],\Phi(\hat{z})]+& [[\Phi(\hat{y}),\Phi(\hat{z})],\Phi(\hat{x})]+ [[\Phi(\hat{z}),\Phi(\hat{x})],\Phi(\hat{y})]\\
		&= \mu([\hat{x}, \hat{y}],\hat{z})I_V+ \Phi([[\hat{x},\hat{y}],\hat{z}])+\mu([\hat{y},\hat{z}],\hat{x})I_V\\
		&\quad + \Phi([[\hat{y},\hat{z}],\hat{x}])+\mu([\hat{z},\hat{x}],\hat{y})I_V+ \Phi([[\hat{z},\hat{x}],\hat{y}]) \\
		&=(\mu([\hat{x},\hat{y}],\hat{z})+\mu([\hat{y},\hat{z}],\hat{x})+\mu([\hat{z},\hat{x}],\hat{y}))I_V\\
		&\quad +\Phi([[\hat{x},\hat{y}],\hat{z}]+[[\hat{y},\hat{z}],\hat{x}]+[[\hat{z},\hat{x}],\hat{y}])\\
		&=(\mu([\hat{x},\hat{y}],\hat{z})+\mu([\hat{y},\hat{z}],\hat{x})+\mu([\hat{z},\hat{x}],\hat{y}))I_V
	\end{split}
\end{equation}
Then from $(2)$,
\begin{equation}
	\mu([\hat{x},\hat{y}],\hat{z})+\mu([\hat{y},\hat{z}],\hat{x})+\mu([\hat{z},\hat{x}],\hat{y})=0
\end{equation}
$\mu$ is a bilinear map and satisfies the 2-cocycle condition. That is, $\mu \in Z^2(\mathcal{L}(G),\mathbb{C})$.  Thus the projective representation $\Phi$ is also referred to as an \textit{$\mu$-representation } on the vector space $V$.
\subsection{Projectively equivalent $\mu$-representations of $\mathcal{L}(G)$}
Here we proceed to define certain equivalence of projective representations of $\mathcal{L}(G)$.
\begin{definition}
	Let $\Phi_1$ be an $\mu_1$-representation and $\Phi_2$ be an $\mu_2$-representation of $\mathcal{L}(G)$ on the complex vector spaces $V$ and $W$ respectively. Then $\Phi_1$ and $\Phi_2$ are projectively equivalent if there exists an isomorphism $f : V \to W$ and a linear map $\delta: \mathcal{L}(G) \to \mathbb{C}$ such that
	\begin{center}
		$\Phi_2(\hat{x}) = f\circ \Phi_1(\hat{x})\circ f^{-1} + \delta(\hat{x})I_W$ for all $\hat{x} \in \mathcal{L}(G)$
	\end{center}
	
	If $\delta(\hat{x})= 0$ for all $\hat{x} \in \mathcal{L}(G)$, then $\Phi_1$ and $\Phi_2$ are linearly equivalent.
\end{definition}
Next we recall the following theorems describing the correspondence between two equivalent projective representations of $\mathcal{L}(G)$ and $H^2(\mathcal{L}(G), \mathbb{C})$. 
\begin{theorem}\cite{Arjun}
	Let $\Phi_1$ be an $\mu_1$-representation and $\Phi_2$ be an $\mu_2$-representation of $\mathcal{L}(G)$. If $\Phi_1$ is projectively equivalent to $\Phi_2$, then $\mu_1$ and $\mu_2$ are cohomologous.
\end{theorem}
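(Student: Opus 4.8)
The plan is to unwind both defining relations and compute the commutator $[\Phi_2(\hat{x}),\Phi_2(\hat{y})]$ in two different ways, then compare. By hypothesis there is an isomorphism $f : V \to W$ and a linear map $\delta : \mathcal{L}(G) \to \mathbb{C}$ with $\Phi_2(\hat{x}) = f \circ \Phi_1(\hat{x}) \circ f^{-1} + \delta(\hat{x})I_W$ for all $\hat{x} \in \mathcal{L}(G)$. The candidate linear map witnessing that $\mu_1$ and $\mu_2$ are cohomologous will turn out to be exactly $\sigma = \delta$.

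First I would compute the bracket directly from the projective-equivalence formula. Since the scalar operators $\delta(\hat{x})I_W$ are central in $\mathfrak{gl}(W)$, every cross term in $[\Phi_2(\hat{x}),\Phi_2(\hat{y})]$ involving a $\delta$-summand vanishes, leaving only $[f\Phi_1(\hat{x})f^{-1},\, f\Phi_1(\hat{y})f^{-1}]$. The inner $f^{-1}f$ cancellations then collapse this to $f[\Phi_1(\hat{x}),\Phi_1(\hat{y})]f^{-1}$. Applying that $\Phi_1$ is a $\mu_1$-representation and using $fI_V f^{-1} = I_W$, I obtain $\mu_1(\hat{x},\hat{y})I_W + f\Phi_1([\hat{x},\hat{y}])f^{-1}$.

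Next I would evaluate the same bracket using that $\Phi_2$ is a $\mu_2$-representation, giving $\mu_2(\hat{x},\hat{y})I_W + \Phi_2([\hat{x},\hat{y}])$, and then substitute the equivalence formula once more to rewrite $\Phi_2([\hat{x},\hat{y}])$ as $f\Phi_1([\hat{x},\hat{y}])f^{-1} + \delta([\hat{x},\hat{y}])I_W$. Equating the two expressions for the bracket, the common term $f\Phi_1([\hat{x},\hat{y}])f^{-1}$ cancels, and comparing coefficients of $I_W$ yields $\mu_1(\hat{x},\hat{y}) = \mu_2(\hat{x},\hat{y}) + \delta([\hat{x},\hat{y}])$, that is, $\mu_2(\hat{x},\hat{y}) - \mu_1(\hat{x},\hat{y}) = -\delta([\hat{x},\hat{y}])$. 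This is precisely the cohomologous condition from the definition, with $\sigma = \delta$.

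Since the argument is a direct manipulation, there is no serious obstacle; the one point deserving care is the observation that the $\delta$-terms are annihilated by the commutator because they are scalar (hence central) operators, so that $\delta$ re-enters the final identity only through $\delta([\hat{x},\hat{y}])$ rather than through $\delta(\hat{x})$ and $\delta(\hat{y})$ separately. It is exactly this collapse that forces the discrepancy $\mu_2 - \mu_1$ to be a coboundary.
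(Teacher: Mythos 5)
Your proof is correct: the direct computation of $[\Phi_2(\hat{x}),\Phi_2(\hat{y})]$ in two ways, using centrality of the scalar terms and taking $\sigma=\delta$, yields exactly the coboundary condition $\mu_2-\mu_1=-\delta([\cdot,\cdot])$ as defined in the paper. The paper itself only cites this result from an earlier work and does not reprint a proof, but your argument is the standard and evidently intended one, with no gaps.
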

\begin{theorem}\cite{Arjun}\label{theorem}
	Let $\Phi_1$ be an $\mu_1$-representation of $\mathcal{L}(G)$ and $\mu_2$ is a 2-cocycle cohomologous to $\mu_1$, then there exists an $\mu _2$-representation $\Phi_2$ of $\mathcal{L}(G)$ which  is projectively equivalent to $\Phi_1$.
\end{theorem}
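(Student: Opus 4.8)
The plan is to produce $\Phi_2$ explicitly as a scalar shift of $\Phi_1$, with the shift dictated precisely by the coboundary relating the two cocycles. Since $\mu_2$ is cohomologous to $\mu_1$, by the definition recalled above there is a linear map $\sigma : \mathcal{L}(G) \to \mathbb{C}$ satisfying $\mu_2(\hat{x}, \hat{y}) - \mu_1(\hat{x}, \hat{y}) = -\sigma([\hat{x}, \hat{y}])$ for all $\hat{x}, \hat{y} \in \mathcal{L}(G)$. I would fix this $\sigma$ once and for all and define $\Phi_2 : \mathcal{L}(G) \to \mathfrak{gl}(V)$ on the \emph{same} space $V$ by
\[
\Phi_2(\hat{x}) = \Phi_1(\hat{x}) + \sigma(\hat{x}) I_V,
\]
which is manifestly linear since both $\Phi_1$ and $\sigma$ are. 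This candidate is forced: projective equivalence allows only a correction by a scalar linear functional, and matching the two cocycle conditions shows that functional must be $\sigma$ itself.

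The next step is to verify that $\Phi_2$ is indeed a $\mu_2$-representation. Here I would compute $[\Phi_2(\hat{x}), \Phi_2(\hat{y})]$ directly. Because $I_V$ is central in $\mathfrak{gl}(V)$, every bracket involving a $\sigma(\cdot) I_V$ term vanishes, so the bracket collapses to $[\Phi_1(\hat{x}), \Phi_1(\hat{y})] = \mu_1(\hat{x}, \hat{y}) I_V + \Phi_1([\hat{x}, \hat{y}])$ by the $\mu_1$-representation property of $\Phi_1$. I would then rewrite $\Phi_1([\hat{x}, \hat{y}]) = \Phi_2([\hat{x}, \hat{y}]) - \sigma([\hat{x}, \hat{y}]) I_V$ from the definition of $\Phi_2$, and substitute the cohomologous relation $\mu_1(\hat{x}, \hat{y}) - \sigma([\hat{x}, \hat{y}]) = \mu_2(\hat{x}, \hat{y})$ to conclude
\[
[\Phi_2(\hat{x}), \Phi_2(\hat{y})] = \mu_2(\hat{x}, \hat{y}) I_V + \Phi_2([\hat{x}, \hat{y}]),
\]
which is exactly the defining identity of a $\mu_2$-representation.

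Finally, projective equivalence of $\Phi_1$ and $\Phi_2$ is immediate from the construction: taking $f = \mathrm{id}_V$ and $\delta = \sigma$ in the definition of projectively equivalent representations gives $\Phi_2(\hat{x}) = f \circ \Phi_1(\hat{x}) \circ f^{-1} + \delta(\hat{x}) I_V$ verbatim. I do not anticipate a genuine obstacle in this argument; the only point requiring care is the bookkeeping that identifies the equivalence parameter $\delta$ with the coboundary witness $\sigma$, and the repeated use of the centrality of $I_V$ to discard the scalar terms inside the Lie bracket. Everything else is a routine substitution.
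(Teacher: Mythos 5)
Your construction is correct: with the paper's sign convention $\mu_2(\hat{x},\hat{y}) - \mu_1(\hat{x},\hat{y}) = -\sigma([\hat{x},\hat{y}])$, the shift $\Phi_2(\hat{x}) = \Phi_1(\hat{x}) + \sigma(\hat{x})I_V$ does yield a $\mu_2$-representation, and taking $f = \mathrm{id}_V$, $\delta = \sigma$ exhibits the projective equivalence verbatim. The paper itself states this theorem without proof (it is recalled from the cited reference), so there is nothing to compare against here, but your argument is the standard one and I see no gap.
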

Thus, any projective representations of $\mathcal{L}(G)$ up to projective equivalence defines an element of the second cohomology group $H^2(\mathcal{L}(G), \mathbb{C})$.

\subsection{Central extensions and covers}\label{coversec}
\par Next we introduce central extension of a Plesken Lie algebra $\mathcal{L}(G)$.
\begin{definition} 
	$G$ and $H$ are two finite groups such that $\mathcal{L}(G)$ and $\mathcal{L}(H)$ are the corresponding Plesken Lie algebras of which $\mathcal{L}(H)$ is abelian. Then $(\mathcal{E}; f, g)$ is an extension of $\mathcal{L}(G)$  by $\mathcal{L}(H)$	if there exists a Lie algebra $\mathcal{E}$ such that the following is a short exact sequence : 
	\begin{center}
		$0 \to \mathcal{L}(H)\xrightarrow{f} \mathcal{E} \xrightarrow{g} \mathcal{L}(G) \to 0$.
	\end{center}
\end{definition}
An extension $(\mathcal{E};f,g)$ is \textit{central} if $f(\mathcal{L}(H)) \subseteq Z(\mathcal{E})$ where $Z(\mathcal{E})$ is the center of $\mathcal{E}$.
\begin{example}\label{central} Consider the subgroups $G=\{ \begin{pmatrix}1&0&y\\0&1&z\\0&0&1\end{pmatrix} : y, z\in \mathbb{Z}_p\}$ and $H=\{ \begin{pmatrix}1&0&y\\0&1&0\\0&0&1\end{pmatrix} : y \in \mathbb{Z}_p\}$ of the Heisenberg group $\mathbb{H}(\mathbb{Z}_p)$. 
	Then the Plesken Lie algebras of $G$ and $H$ are	
	\begin{equation*}
		\begin{split}
			\mathcal{L}(G)&= span_{\mathbb{C}}\{ \begin{pmatrix}0&0&y\\0&0&z\\0&0&0\end{pmatrix} : y, z\in \mathbb{Z}_p\}
			\text{ and } \\
			\mathcal{L}(H)&= span_{\mathbb{C}}\{ \begin{pmatrix}0&0&y\\0&0&0\\0&0&0\end{pmatrix} : b\in \mathbb{Z}_p\}
		\end{split}
	\end{equation*}
	respectively. Consider 
	\begin{center}
		$\mathfrak{h} = \mathcal{L}(\mathbb{H}(\mathbb{Z}_p))= span_{\mathbb{C}}\{ \begin{pmatrix}0&x&y\\0&0&z\\0&0&0\end{pmatrix} : x, y, z\in \mathbb{Z}_p\}$,
	\end{center}
	$f : \mathcal{L}(H)\to \mathfrak{h}$  is the identity inclusion and $g : \mathfrak{h} \to \mathcal{L}(G)$ is given by
	\begin{center}
		$g(\begin{pmatrix}0&x&y\\0&0&z\\0&0&0\end{pmatrix}) = \begin{pmatrix}0&0&x\\0&0&z\\0&0&0\end{pmatrix}$.
	\end{center}
	Then $f$ and $g$ are Lie algebra homomorphisms and $ker(g) = Im(f)$. Also $f(\mathcal{L}(H)) = \mathcal{L}(H) = Z(\mathfrak{h})$, the center of $\mathfrak{h}$. Thus  $0 \to \mathcal{L}(H)\xrightarrow{f} \mathfrak{h} \xrightarrow{g} \mathcal{L}(G) \to 0$ is a central extension.

	%(since $\mathcal{L}(G)\cong e/\mathcal{L}(H)$, $g$ is a homomorphism).

\end{example}
\begin{definition}
Two central extensions
\begin{center}
	$0\to \mathcal{L}(H) \xrightarrow{f} \mathcal{E} \xrightarrow{g} \mathcal{L}(G) \to 0$
\end{center}
and
\begin{center}
	$0\to \mathcal{L}(H) \xrightarrow{f'} \mathcal{E}' \xrightarrow{g'} \mathcal{L}(G) \to 0$
\end{center}
of $\mathcal{L}(G)$ by $\mathcal{L}(H)$ are equivalent if there exists a Lie algebra homomorphism $\phi : \mathcal{E} \to \mathcal{E}'$ such that the following diagram commutes:
\begin{equation}
	\begin{tikzpicture}
		\matrix (m) [matrix of math nodes, row sep=3em, column sep=3em]
		{ 0 & \mathcal{L}(H) & \mathcal{E}  & \mathcal{L}(G) & 0 \\
			0 & \mathcal{L}(H) & \mathcal{E}'  & \mathcal{L}(G) & 0 \\ };
		{ [start chain] \chainin (m-1-1);
			\chainin (m-1-2);
			{ [start branch=A] \chainin (m-2-2)
				[join={node[right,labeled] {id}}];}
			\chainin (m-1-3) [join={node[above,labeled] {f}}];
			{ [start branch=B] \chainin (m-2-3)
				[join={node[right,labeled] {\phi}}];}
			\chainin (m-1-4) [join={node[above,labeled] {g}}];
			{ [start branch=C] \chainin (m-2-4)
				[join={node[right,labeled] {id}}];}
			\chainin (m-1-5); }
		{ [start chain] \chainin (m-2-1);
			\chainin (m-2-2);
			\chainin (m-2-3) [join={node[above,labeled] {f'}}];
			\chainin (m-2-4) [join={node[above,labeled] {g'}}];
			\chainin (m-2-5); }
	\end{tikzpicture}
\end{equation}
\end{definition}
\begin{remark}
	If such a homomorphism $\phi$ exists, then it must be an isomorphism ( see \cite{Batten}).
\end{remark}

The following theorem gives us the relation between the multiplier $H^2(\mathcal{L}(G), \mathbb{C})$ and central extensions of $\mathcal{L}(G)$ by $\mathbb{C}$.
\begin{theorem}\cite{Arjun}
	Let $\mathcal{L}(G)$ be a Plesken Lie algebra of a finite group $G$. Then there is a bijective correspondence between $H^2(\mathcal{L}(G), \mathbb{C})$ and equivalence class of central extensions of $\mathcal{L}(G)$ by $\mathbb{C}$.
\end{theorem}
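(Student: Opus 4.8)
The plan is to exhibit explicit maps in both directions between $H^2(\mathcal{L}(G), \mathbb{C})$ and the set of equivalence classes of central extensions of $\mathcal{L}(G)$ by $\mathbb{C}$, and then to check that these maps are well defined on the respective equivalence classes and are mutually inverse. First I would pass from a cohomology class to a central extension. Given a $2$-cocycle $\alpha \in Z^2(\mathcal{L}(G), \mathbb{C})$, define a Lie algebra $\mathcal{E}_\alpha$ whose underlying vector space is $\mathcal{L}(G) \oplus \mathbb{C}$, equipped with the bracket
\[
	[(x, a), (y, b)] = ([x, y],\ \alpha(x, y)).
\]
Antisymmetry of this bracket follows from antisymmetry of $[\cdot,\cdot]$ together with that of $\alpha$, and the Jacobi identity for $\mathcal{E}_\alpha$ reduces in its central component precisely to the cocycle identity $\alpha([x,y],z) + \alpha([y,z],x) + \alpha([z,x],y) = 0$; this is exactly the computation already carried out in $(2)$--$(5)$, read in reverse. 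Taking $f : \mathbb{C} \to \mathcal{E}_\alpha$, $a \mapsto (0,a)$, and $g : \mathcal{E}_\alpha \to \mathcal{L}(G)$, $(x,a) \mapsto x$, yields a short exact sequence, and since $\alpha$ is bilinear we have $[(0,a),(y,b)] = (0, \alpha(0,y)) = (0,0)$, so $f(\mathbb{C}) \subseteq Z(\mathcal{E}_\alpha)$ and the extension is central.

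Conversely, from a central extension $0 \to \mathbb{C} \xrightarrow{f} \mathcal{E} \xrightarrow{g} \mathcal{L}(G) \to 0$ I would recover a cocycle by choosing a linear section $s : \mathcal{L}(G) \to \mathcal{E}$ of $g$, which exists because $g$ is a surjective linear map of vector spaces. Since $g$ is a Lie homomorphism, $g\big([s(x),s(y)] - s([x,y])\big) = 0$, so $[s(x),s(y)] - s([x,y]) \in \ker g = \operatorname{Im} f$, and I define $\alpha(x,y) \in \mathbb{C}$ by $f(\alpha(x,y)) = [s(x),s(y)] - s([x,y])$. Bilinearity of $\alpha$ is immediate. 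Applying the Jacobi identity in $\mathcal{E}$ to $s(x), s(y), s(z)$, using that the correction terms $f(\alpha(\cdot,\cdot))$ are central, and then using injectivity of $f$, gives the $2$-cocycle condition for $\alpha$, so $\alpha \in Z^2(\mathcal{L}(G),\mathbb{C})$.

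Next I would show both assignments descend to equivalence classes and are mutually inverse. If $s'$ is a second section, then $s'(x) = s(x) + f(\sigma(x))$ for a unique linear $\sigma : \mathcal{L}(G) \to \mathbb{C}$, and a direct computation gives $\alpha'(x,y) - \alpha(x,y) = -\sigma([x,y])$, so $\alpha$ and $\alpha'$ are cohomologous; hence the class $[\alpha] \in H^2(\mathcal{L}(G),\mathbb{C})$ is independent of the section. A parallel computation shows that equivalent central extensions, related by an isomorphism $\phi$ as in diagram $(6)$, yield cohomologous cocycles, and conversely that cohomologous cocycles $\alpha, \alpha'$ with $\alpha' - \alpha = -\sigma([\cdot,\cdot])$ produce equivalent extensions via the isomorphism $\mathcal{E}_\alpha \to \mathcal{E}_{\alpha'}$, $(x,a) \mapsto (x,\, a - \sigma(x))$. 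Finally one verifies that the canonical section $x \mapsto (x,0)$ of $\mathcal{E}_\alpha$ recovers $\alpha$, and that the extension built from a cocycle read off a given extension is equivalent to the original via $(x,a) \mapsto s(x) + f(a)$; these two checks close the loop and establish the bijection.

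The main obstacle will be the bookkeeping in matching the two equivalence relations, rather than any single hard idea: one must verify carefully that the assignment $[\alpha] \mapsto [\mathcal{E}_\alpha]$ and its inverse are genuinely well defined and mutually inverse, in particular that the isomorphism $\phi$ witnessing equivalence of extensions translates exactly into the coboundary $-\sigma([\cdot,\cdot])$ and back, with consistent signs. The algebraic core---that the Jacobi identity in the extension is equivalent to the cocycle condition---is already essentially supplied by the computation $(2)$--$(5)$, so the remaining work is to assemble these pieces into a clean, mutually inverse pair of maps.
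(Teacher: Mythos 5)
Your proposal is correct and is the standard cocycle--extension correspondence; note that the paper does not actually prove this theorem but imports it from \cite{Arjun}, and the argument there (following \cite{Batten}) is exactly the construction you describe: $\mathcal{E}_\alpha=\mathcal{L}(G)\oplus\mathbb{C}$ with bracket twisted by $\alpha$ in one direction, and $\alpha(x,y)=[s(x),s(y)]-s([x,y])$ via a section in the other, with change of section accounting for the coboundaries $-\sigma([\cdot,\cdot])$. The one small point to watch is that your bracket on $\mathcal{E}_\alpha$ is antisymmetric only if $\alpha$ is alternating, which is part of the standard definition of a Lie-algebra $2$-cochain even though the paper's displayed definition of $Z^2(\mathcal{L}(G),\mathbb{C})$ omits that requirement.
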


Now it is easy to observe the following:
\begin{itemize}
	\item The classical First and Second Whitehead Lemmas state that the first, respectively second, cohomology group of a finite-dimensional semisimple Lie algebra with coefficients in any finite-dimensional module vanishes (see \cite{whitehead}). Thus we can conclude that for semisimple Plesken Lie algebras, there exists no non-trivial projective representations as well as non-trivial central extensions.
	\item We showed the bijective correspondence between set of equivalence class of central extensions of $\mathcal{L}(G)$ by $\mathbb{C}$ and $H^2(\mathcal{L}(G), \mathbb{C})$ and the bijective correspondence between set of equivalence classes of projective representations of $\mathcal{L}(G)$ and  $H^2(\mathcal{L}(G), \mathbb{C})$. Hence we infer that there is a bijective correspondence between equivalent projective representations of $\mathcal{L}(G)$ and equivalent central extensions of $\mathcal{L}(G)$ by $\mathbb{C}$.
\end{itemize}
\subsection{Cover of $\mathcal{L}(G)$}
Next we proceed to describe the linear representation of the cover of a Plesken Lie algebra $\mathcal{L}(G)$ using projective representation of $\mathcal{L}(G)$.
\par First we recall some definitions and results related to covers of Lie algebra (see \cite{Batten}).
\begin{definition}
	A pair of Lie algebras $(\mathcal{E}, \mathcal{M})$ is said to be a defining pair of $L$ if 
	\begin{enumerate}
		\item $L \cong \mathcal{E}/\mathcal{M}$
		\item $\mathcal{M} \subseteq Z(\mathcal{E}) \cup [\mathcal{E}, \mathcal{E}]$
	\end{enumerate}
\end{definition}

\begin{lemma}\label{maximal}
	Let $L$ be a Lie algebra of finite dimension $n$ and $\mathcal{E}$ be the first term in a defining pair of $L$. Then $ dim(\mathcal{E}) \le \frac{n(n+1)}{2}$.
\end{lemma}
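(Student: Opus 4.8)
The plan is to reduce the bound on $\dim(\mathcal{E})$ to a bound on the dimension of the central ideal $\mathcal{M}$, and then to bound $\dim(\mathcal{M})$ by the number of independent brackets coming from a transversal of $\mathcal{M}$ in $\mathcal{E}$. Write $\dim(\mathcal{M}) = m$. Since $L \cong \mathcal{E}/\mathcal{M}$, the dimension formula for quotients gives $\dim(\mathcal{E}) = \dim(\mathcal{E}/\mathcal{M}) + \dim(\mathcal{M}) = n + m$, so it suffices to prove $m \le \frac{n(n-1)}{2}$; substituting then yields $\dim(\mathcal{E}) \le n + \frac{n(n-1)}{2} = \frac{n(n+1)}{2}$, which is the desired inequality.

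First I would fix a vector space complement of $\mathcal{M}$ in $\mathcal{E}$. Pick a basis $\{\bar{x}_1, \dots, \bar{x}_n\}$ of $L \cong \mathcal{E}/\mathcal{M}$ and lift it to elements $x_1, \dots, x_n \in \mathcal{E}$; then $W = \mathrm{span}_{\mathbb{C}}\{x_1, \dots, x_n\}$ satisfies $\mathcal{E} = W \oplus \mathcal{M}$ as vector spaces, and the $x_i$ are linearly independent (any relation $\sum c_i x_i = 0$ projects to a relation among the $\bar{x}_i$, forcing all $c_i = 0$). The key step is to invoke the defining-pair conditions, namely that $\mathcal{M}$ is central and that $\mathcal{M} \subseteq [\mathcal{E}, \mathcal{E}]$. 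For arbitrary $a = w_a + m_a$ and $b = w_b + m_b$ with $w_a, w_b \in W$ and $m_a, m_b \in \mathcal{M}$, centrality of $\mathcal{M}$ forces every mixed bracket to vanish, so that $[a,b] = [w_a, w_b]$. Hence $[\mathcal{E}, \mathcal{E}] = [W, W]$ is spanned by the brackets $\{[x_i, x_j] : 1 \le i, j \le n\}$, and by antisymmetry ($[x_i,x_i]=0$ and $[x_j,x_i]=-[x_i,x_j]$) already by the $\binom{n}{2}$ elements $\{[x_i, x_j] : 1 \le i < j \le n\}$. Therefore $\dim([\mathcal{E},\mathcal{E}]) \le \frac{n(n-1)}{2}$.

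Finally, since the defining-pair condition gives $\mathcal{M} \subseteq [\mathcal{E}, \mathcal{E}]$, I would conclude $m = \dim(\mathcal{M}) \le \dim([\mathcal{E},\mathcal{E}]) \le \frac{n(n-1)}{2}$, and combine this with $\dim(\mathcal{E}) = n + m$ to obtain the stated bound.

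I expect the only genuine subtlety to be the use of centrality to collapse every bracket in $\mathcal{E}$ to a bracket among the lifted basis vectors; once that observation is in place, the counting of antisymmetric pairs and the arithmetic are routine. A point worth stating carefully is that \emph{both} conditions $\mathcal{M} \subseteq Z(\mathcal{E})$ and $\mathcal{M} \subseteq [\mathcal{E},\mathcal{E}]$ are needed here (so the defining-pair hypothesis is the intersection $Z(\mathcal{E}) \cap [\mathcal{E},\mathcal{E}]$): centrality is used to reduce the spanning set of $[\mathcal{E},\mathcal{E}]$ to $\binom{n}{2}$ brackets, while containment in the derived algebra is used to bound $\dim(\mathcal{M})$ by $\dim([\mathcal{E},\mathcal{E}])$.
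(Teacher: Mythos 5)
Your proof is correct. The paper itself gives no proof of this lemma --- it is recalled from Batten's thesis --- and your argument is precisely the standard one from that source: decompose $\mathcal{E}=W\oplus\mathcal{M}$ with $W$ spanned by lifts $x_1,\dots,x_n$ of a basis of $L$, use centrality of $\mathcal{M}$ to see that $[\mathcal{E},\mathcal{E}]=[W,W]$ is spanned by the $\binom{n}{2}$ brackets $[x_i,x_j]$ with $i<j$, and then use $\mathcal{M}\subseteq[\mathcal{E},\mathcal{E}]$ to get $\dim(\mathcal{M})\le\frac{n(n-1)}{2}$ and hence $\dim(\mathcal{E})=n+\dim(\mathcal{M})\le\frac{n(n+1)}{2}$. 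Your closing remark is also well taken: the paper's definition of a defining pair writes $\mathcal{M}\subseteq Z(\mathcal{E})\cup[\mathcal{E},\mathcal{E}]$, which must be read as the intersection $Z(\mathcal{E})\cap[\mathcal{E},\mathcal{E}]$ (as in Batten) for the lemma to hold at all --- with a genuine union one could take $\mathcal{M}$ to be an arbitrarily large central abelian direct summand and violate the bound --- and your proof makes explicit where each of the two containments is used.
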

\begin{definition}
	$(\mathcal{E}, \mathcal{M})$ is called maximal defining pair if the dimension of $\mathcal{E}$ is maximal. For these maximal defining pairs, $\mathcal{E}$ is called cover of $L$ and $\mathcal{M}$, which is unique since it is abelian, is called the multiplier for $L$. 
\end{definition}
We can alternatively define cover of a Lie algebra $L$ using central extensions. In particular for Plesken Lie algebras, the definition of a cover is as follows:
\begin{definition}
	Let $\mathcal{L}(G)$ and $\mathcal{L}(H)$ be Plesken Lie algebras of which $\mathcal{L}(H)$ is abelian. A Plesken Lie algebra $\mathcal{E}$ is said to be cover of  $\mathcal{L}(G)$ if
	\begin{enumerate}
		\item $0 \to \mathcal{L}(H) \xrightarrow{f} \mathcal{E} \xrightarrow{g} \mathcal{L}(G) \to 0$ is a central extension
		\item $Ker(g) \subseteq [\mathcal{E}, \mathcal{E}]$ and $Ker(g) \cong H^2(\mathcal{L}(G), \mathbb{C})$
		\item $dim(\mathcal{E})$ is maximal.
	\end{enumerate}
\end{definition}
\par In the following example, we recall the central extension described in Example \ref{central} and explain the cover of $\mathcal{L}(G)$ where $G=\{ \begin{pmatrix}1&0&b\\0&1&c\\0&0&1\end{pmatrix} : b, c\in \mathbb{Z}_p\}$.
\begin{example}\label{cover}
	Consider the central extension  $0 \to \mathcal{L}(H)\xrightarrow{f} \mathfrak{h} \xrightarrow{g} \mathcal{L}(G) \to 0$ of $\mathcal{L}(G)$ by $\mathcal{L}(H)$ as in Example \ref{central}. 
	Here 
	\begin{center}
		$[\mathfrak{h}, \mathfrak{h}] =  span_{\mathbb{C}}\{ \begin{pmatrix}0&0&b\\0&0&0\\0&0&0\end{pmatrix} : b\in \mathbb{Z}_p\}$
	\end{center}
	and thus $Ker(g) = [\mathfrak{h}, \mathfrak{h}]$. Also 
	\begin{equation*}
		\begin{split}
			dim(Ker(g))= dim(Im(f))=dim(\mathcal{L}(H)) = 1	,	   
		\end{split}
	\end{equation*}
	
	$dim(H^2(\mathcal{L}(G), \mathbb{C})) = 1$ (see Lemma 2.5, \cite{Peyman}) and since every one dimensional abelian Plesken Lie algebras are isomorphic, $Ker(g) \cong H^2(\mathcal{L}(G), \mathbb{C})$. Also $dim(\mathfrak{h}) = 3$ which is maximal due to lemma \ref{maximal}.  Hence, $\mathfrak{h}$ is a cover of $\mathcal{L}(G)$.
\end{example}
Next we investigate when it will be possible to find a cover $\mathcal{E}$ of $\mathcal{{L}}(G)$  such that given a projective representation of $\mathcal{L}(G)$, there corresponds a linear representation of $\mathcal{E}$.

\par For, recall the 5 sequence of cohomology for central extensions of Plesken Lie algebras also known as \textit{the Hochschild - Serre spectral sequence of low dimensions} (see \cite{Batten}). If $I$ is an ideal of the Plesken Lie algebra $\mathcal{L}(G)$, then it is easy to observe that $\mathcal{L}(G)/I$ inherits a natural Lie algebra structure from $\mathcal{L}(G)$, and there is an exact sequence of Lie algebra homomorphisms
\begin{equation}\label{HSSsequence}
	0 \to I \xrightarrow{f}\mathcal{L}(G) \xrightarrow{g} \mathcal{L}(G)/I \to 0.
\end{equation}
and $s : \mathcal{L}(G)/I \to \mathcal{L}(G)$ is a the section of $g$. For an $\mathcal{L}(G)$-module $A$ the sequence

\begin{equation}
	Hom(\mathcal{L}(G)/I, A) \xrightarrow{Inf_1} Hom(\mathcal{L}(G), A) \xrightarrow{Res} Hom(I, A) \xrightarrow{Tra} H^2(\mathcal{L}(G), A) \xrightarrow{Inf_2} H^2(\mathcal{L}(G)/I, A)
\end{equation}
is exact and is called the Hochschild - Serre spectral sequence (Theorem 3.1, cf. \cite{Batten})
, where $Inf_1$ and $Inf_2$ are inflation maps, $Res$ is the restriction map and $Tra : Hom(I, A) \rightarrow H^2(\mathcal{L}(G), A)$ is the Transgression map and is defined by
\begin{center}
	$Tra(\chi) = [\chi \circ \beta]$
\end{center}
for $\chi \in Hom(I, A)$, where $\beta(x, y) = [s(\hat{x}), s(\hat{y})] - s([\hat{x}, \hat{y}])$; $s$ is the section of $g$ in (\ref{HSSsequence}).
\par The projective representation of $\mathcal{L}(G)$ leads to a linear representation of the cover of $\mathcal{L}(G)$, as outlined in the following theorem.
 
\begin{theorem}\label{linearrepcover}
	Let $\mathcal{L}(G)$ be a Plesken Lie algebra and $\mathcal{E}$ be its cover. Suppose $\Phi : \mathcal{L}(G) \to \mathfrak{gl}(V)$ is $\mu$-representation of $\mathcal{L}(G)$. Let $[\mu] \in Im(Tra)$. Then there is a Lie algebra homomorphism $ \Gamma : \mathcal{E} \to \mathfrak{gl}(V)$  such that $\Gamma(\hat{h})$ is a scalar multiple of the identity transformation on $V$ for any $\hat{h} \in \mathcal{L}(H)$.
\end{theorem}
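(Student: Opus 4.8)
The plan is to exploit the fact that the cover $\mathcal{E}$ is built, up to a factor system, out of $\mathcal{L}(G)$ and the central ideal $f(\mathcal{L}(H))$, and to glue the projective representation $\Phi$ together with the character $\chi$ furnished by the transgression map into a genuine linear representation of $\mathcal{E}$.

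First I would fix a linear section $s : \mathcal{L}(G) \to \mathcal{E}$ of $g$, as in the Hochschild--Serre sequence (\ref{HSSsequence}). Since $Ker(g) = Im(f)$ and $f$ is injective, this yields a vector-space decomposition $\mathcal{E} = s(\mathcal{L}(G)) \oplus f(\mathcal{L}(H))$, so that every $e \in \mathcal{E}$ is written uniquely as $e = s(g(e)) + f(\hat{h}_e)$ with $\hat{h}_e \in \mathcal{L}(H)$ depending linearly on $e$. The failure of $s$ to be a homomorphism is measured by the factor system $\beta(\hat{x}, \hat{y}) \in \mathcal{L}(H)$ defined by $f(\beta(\hat{x}, \hat{y})) = [s(\hat{x}), s(\hat{y})] - s([\hat{x}, \hat{y}])$, which does land in $\mathcal{L}(H)$ precisely because $g$ is a homomorphism.

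Next I would invoke the hypothesis $[\mu] \in Im(Tra)$ to produce $\chi \in Hom(\mathcal{L}(H), \mathbb{C})$ with $Tra(\chi) = [\chi \circ \beta] = [\mu]$. Thus $\mu$ and $\chi \circ \beta$ are cohomologous $2$-cocycles, and by Theorem \ref{theorem} I may replace $\Phi$ by a projectively equivalent $(\chi \circ \beta)$-representation; after this harmless adjustment I will assume that $\mu(\hat{x}, \hat{y}) = \chi(\beta(\hat{x}, \hat{y}))$ for all $\hat{x}, \hat{y} \in \mathcal{L}(G)$.

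Finally I would define $\Gamma : \mathcal{E} \to \mathfrak{gl}(V)$ by $\Gamma(s(\hat{x}) + f(\hat{h})) = \Phi(\hat{x}) + \chi(\hat{h}) I_V$, equivalently $\Gamma(e) = \Phi(g(e)) + \chi(\hat{h}_e) I_V$. Linearity is immediate from that of $s$, $f$, $g$, $\Phi$ and $\chi$, and for $e = f(\hat{h})$ one has $g(e) = 0$, so $\Gamma(f(\hat{h})) = \chi(\hat{h}) I_V$ is a scalar multiple of the identity, which gives the last assertion. The core of the argument, and the step I expect to be the main obstacle, is verifying $\Gamma([e_1, e_2]) = [\Gamma(e_1), \Gamma(e_2)]$. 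Writing $e_i = s(\hat{x}_i) + f(\hat{h}_i)$ and using that $f(\mathcal{L}(H)) \subseteq Z(\mathcal{E})$, every bracket involving a central part vanishes, so $[e_1, e_2] = [s(\hat{x}_1), s(\hat{x}_2)] = s([\hat{x}_1, \hat{x}_2]) + f(\beta(\hat{x}_1, \hat{x}_2))$ and hence $\Gamma([e_1, e_2]) = \Phi([\hat{x}_1, \hat{x}_2]) + \chi(\beta(\hat{x}_1, \hat{x}_2)) I_V$. On the other side, the centrality of $I_V$ together with the defining relation of the $\mu$-representation gives $[\Gamma(e_1), \Gamma(e_2)] = [\Phi(\hat{x}_1), \Phi(\hat{x}_2)] = \mu(\hat{x}_1, \hat{x}_2) I_V + \Phi([\hat{x}_1, \hat{x}_2])$, and the two expressions coincide exactly because of the normalization $\mu = \chi \circ \beta$. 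The genuinely delicate point is keeping the bookkeeping between the $\mathcal{L}(H)$-valued factor system $\beta$ and the scalar cocycle $\mu$ consistent; this is precisely what the transgression hypothesis, combined with the cohomologous adjustment from Theorem \ref{theorem}, is there to guarantee.
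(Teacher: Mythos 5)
Your proposal is correct and follows essentially the same route as the paper: decompose $\mathcal{E}$ via the section $s$ of $g$, extract $\chi \in Hom(\mathcal{L}(H),\mathbb{C})$ from the hypothesis $[\mu]\in Im(Tra)$, define $\Gamma(s(\hat{x})+f(\hat{h})) = \Phi(\hat{x}) + \chi(\hat{h})I_V$, and check the bracket on both sides using centrality of $f(\mathcal{L}(H))$ and of $I_V$. You are in fact slightly more careful than the paper, which passes directly from $[\mu]\in Im(Tra)$ to the pointwise identity $\mu = \chi\circ\beta$; your intermediate appeal to Theorem \ref{theorem}, replacing $\Phi$ by a projectively equivalent $(\chi\circ\beta)$-representation, is the step that legitimately upgrades ``cohomologous'' to ``equal'' before the construction is carried out.
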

\begin{proof}
	Since $\Phi : \mathcal{L}(G) \to \mathfrak{gl}(V)$ is $\mu$-representation of $\mathcal{L}(G)$, for any $x, y \in \mathcal{L}(G)$
	\begin{center}
		$[\Phi(\hat{x}), \Phi(\hat{y})] = \mu(\hat{x}, \hat{y})I_V+\Phi([\hat{x}, \hat{y}])$.
	\end{center}
	Since $\mathcal{E}$ is the cover of $\mathcal{L}(G)$, there is a central extension $0 \to \mathcal{L}(H) \xrightarrow{f} \mathcal{E} \xrightarrow{g} \mathcal{L}(G) \to 0$ and  $[\mu] \in Im(Tra)$ implies that there exists $\chi \in Hom(\mathcal{L}(H), \mathbb{C})$ such that $\mu(\hat{x}, \hat{y}) = \chi(\beta(\hat{x}, \hat{y}))$.
	We have, for any $\hat{x} \in \mathcal{E}$, $\hat{x}= \hat{h} + s(\hat{y})$ for some $\hat{h} \in \mathcal{L}(H)$ and $\hat{y} \in \mathcal{L}(G)$ (where $s : \mathcal{L}(G) \to \mathcal{E}$ is the section of $g$). Define $\Gamma : \mathcal{E} \to \mathfrak{gl}(V)$ by
	\begin{center}
		$\Gamma(\hat{x}) = \Gamma(\hat{h} +s(\hat{y})) = \chi(\hat{h})I_V+\Phi(\hat{y})$
	\end{center}
	then $\Gamma$ is linear and for any $\hat{x_1} = \hat{h_1} +s(\hat{y_1}), \hat{x_2} = \hat{h_2} +s(\hat{y_2}) \in \mathcal{E}$,
	\begin{equation}\label{Gamma1}
		\begin{split}
			\Gamma([\hat{x_1}, \hat{x_2}])&= \Gamma([\hat{h_1} +s(\hat{y_1}),  \hat{h_2} +s(\hat{y_2})])\\
			&= \Gamma([s(\hat{y_1}), s(\hat{y_2})])\\
			&=\Gamma(\beta(\hat{y_1}, \hat{y_2}) + s([\hat{y_1}, \hat{y_2}]))\\
			&= \chi(\beta(\hat{y_1}, \hat{y_2}))I_V+\Phi([\hat{y_1}, \hat{y_2}])\\
			&= \mu(\hat{y_1}, \hat{y_2})I_V+ \Phi([\hat{y_1}, \hat{y_2}])\\
			&=[\Phi(\hat{y_1}), \Phi(\hat{y_2})]
		\end{split}
	\end{equation}
	and 
	\begin{equation}\label{Gamma2}
		\begin{split}
			[\Gamma(\hat{x_1}), \Gamma(\hat{x_2})]&=[\chi(\hat{h_1})I_V+\Phi(\hat{y_1}), \chi(\hat{h_2})I_V+\Phi(\hat{y_2})]\\
			&= [\Phi(\hat{y_1}), \Phi(\hat{y_2})].
		\end{split}
	\end{equation}
	Equations (\ref{Gamma1}) and (\ref{Gamma2}) gives $\Gamma$ is a Lie algebra homomorphism.
	Also for any $\hat{h} \in \mathcal{L}(H)$, $\Gamma(\hat{h})$ is a scalar multiple of the identity transformation on $V$.
\end{proof}

Conversely, given a linear representation of cover of $\mathcal{L}(G)$, then it is possible to obtain a projective representation of $\mathcal{L}(G)$. First we prove the following lwmma.

 \begin{lemma}\label{projlemma}
 Consider the commutative diagram of Lie algebra homomorphisms
 \begin{equation}
	\begin{tikzpicture}
		\matrix (m) [matrix of math nodes, row sep=3em, column sep=3em]
		{ 0 & \mathcal{L}(H) & \mathcal{E}  & \mathcal{L}(G) & 0 \\
			0 & \mathcal{L}(H') & \mathcal{E}'  & \mathcal{L}(G') & 0 \\ };
		{ [start chain] \chainin (m-1-1);
			\chainin (m-1-2);
			{ [start branch=A] \chainin (m-2-2)
				[join={node[right,labeled] {\phi}}];}
			\chainin (m-1-3) [join={node[above,labeled] {f}}];
			{ [start branch=B] \chainin (m-2-3)
				[join={node[right,labeled] {\psi}}];}
			\chainin (m-1-4) [join={node[above,labeled] {g}}];
			
			\chainin (m-1-5); }
		{ [start chain] \chainin (m-2-1);
			\chainin (m-2-2);
			\chainin (m-2-3) [join={node[above,labeled] {f'}}];
			\chainin (m-2-4) [join={node[above,labeled] {g'}}];
			\chainin (m-2-5); }
	\end{tikzpicture}
\end{equation}
such that each line is a short exact sequence. Then there is a unique Lie algebra homomorphism $\delta : \mathcal{L}(G) \to \mathcal{L}(G')$ such that $\delta \circ g = g' \circ \Psi$. 

 \end{lemma}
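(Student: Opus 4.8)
The plan is to construct $\delta$ via the universal property of the quotient, realizing $\mathcal{L}(G)$ as $\mathcal{E}/\operatorname{Ker}(g)$. Since the top row is a short exact sequence, $g$ is surjective and $\operatorname{Ker}(g) = \operatorname{Im}(f)$, so the first isomorphism theorem gives $\mathcal{L}(G) \cong \mathcal{E}/\operatorname{Im}(f)$. The only sensible candidate for $\delta$ is the map induced by $g' \circ \psi$, since the required identity $\delta \circ g = g' \circ \psi$ forces $\delta(g(e)) = g'(\psi(e))$ for every $e \in \mathcal{E}$. Thus the whole argument reduces to showing that the Lie algebra homomorphism $g' \circ \psi : \mathcal{E} \to \mathcal{L}(G')$ factors through $g$.

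The decisive step is to verify that $g' \circ \psi$ vanishes on $\operatorname{Ker}(g) = \operatorname{Im}(f)$. For any $\hat{h} \in \mathcal{L}(H)$, commutativity of the left-hand square yields $\psi(f(\hat{h})) = f'(\phi(\hat{h}))$, while exactness of the bottom row gives $\operatorname{Im}(f') = \operatorname{Ker}(g')$, hence $g' \circ f' = 0$. Combining these, $g'(\psi(f(\hat{h}))) = g'(f'(\phi(\hat{h}))) = 0$, so $g' \circ \psi$ annihilates $\operatorname{Im}(f) = \operatorname{Ker}(g)$. I expect this diagram chase to be the main (indeed the only non-formal) obstacle; it is exactly the place where both the commutativity hypothesis and the exactness of the lower row are used.

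Granting this, the universal property of the quotient supplies a unique Lie algebra homomorphism $\delta : \mathcal{L}(G) \to \mathcal{L}(G')$ with $\delta \circ g = g' \circ \psi$. Concretely, for $x \in \mathcal{L}(G)$ I would pick any $e \in \mathcal{E}$ with $g(e) = x$ (possible since $g$ is onto) and set $\delta(x) = g'(\psi(e))$; the vanishing established above shows this is independent of the chosen lift $e$, because two lifts differ by an element of $\operatorname{Ker}(g) = \operatorname{Im}(f)$ on which $g' \circ \psi$ is zero. Linearity and bracket-preservation of $\delta$ are then inherited directly from the fact that $g'$ and $\psi$ are homomorphisms and $g$ is a surjective homomorphism: for lifts $e_1, e_2$ of $x_1, x_2$ one has $g([e_1,e_2]) = [x_1,x_2]$, whence $\delta([x_1,x_2]) = g'(\psi([e_1,e_2])) = [\,g'(\psi(e_1)), g'(\psi(e_2))\,] = [\delta(x_1), \delta(x_2)]$, and similarly for sums and scalars.

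Finally, uniqueness is immediate from the surjectivity of $g$: if $\delta'$ is another homomorphism with $\delta' \circ g = g' \circ \psi$, then for every $e \in \mathcal{E}$ we have $\delta'(g(e)) = g'(\psi(e)) = \delta(g(e))$, and since $g(\mathcal{E}) = \mathcal{L}(G)$ this forces $\delta' = \delta$. In summary, the proof is a single well-definedness check followed by routine verification that the induced map is a homomorphism, so I would present the factorization of $g' \circ \psi$ through $g$ as the heart of the argument and treat the remaining items briefly.
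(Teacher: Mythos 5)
Your proposal is correct and follows essentially the same route as the paper: both realize $\mathcal{L}(G)$ as $\mathcal{E}/\operatorname{Ker}(g)$ and define $\delta$ by sending the class of $\bar{x}$ to $g'(\psi(\bar{x}))$. In fact your write-up is more careful than the paper's: the paper states this formula and verifies $\delta \circ g = g' \circ \psi$, but omits both the well-definedness check (that $g' \circ \psi$ annihilates $\operatorname{Im}(f) = \operatorname{Ker}(g)$ --- precisely the step where the commutativity of the left square and the exactness of the bottom row are actually used) and the uniqueness argument from surjectivity of $g$, both of which you supply.
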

\begin{proof}
Since each line is a short exact sequence ,
\begin{center}
$\mathcal{L}(G) \cong {\mathcal{E}}/{\mathcal{L}(H)}$ and $\mathcal{L}(G') \cong {\mathcal{E}}/{\mathcal{L}(H')}$
\end{center}
Now define $\delta : \mathcal{L}(G) \to \mathcal{L}(G')$ by
\begin{center}
$\delta(x) = \delta (\bar{x} + \mathcal{L}(H)) = (g' \circ \Psi)(\bar{x})$ \quad for $\bar{x} \in \mathcal{E}$.
\end{center}
Since both $g'$ and $\Psi$ are Lie algebra homomorphisms, $\delta$ is also a Lie algebra homomorphism.\\
Also, for all $\bar{x} \in \mathcal{E}$,
\begin{equation}
	\begin{split}
		(\delta \circ g)(\bar{x}) &= \delta(g(\bar{x}))\\
								&= \delta(\bar{x} + \mathcal{L}(H))\\
								&= (g' \circ \Psi)(\bar{x})
	\end{split}
\end{equation}
\end{proof}
\begin{remark}
 Note that 
 \begin{center}
  $ 0 \to \mathbb{F} \xrightarrow{\mu} \mathfrak{gl}(V) \xrightarrow{\pi} \mathfrak{pgl}(V) \to 0$
 \end{center}
 is a central extension of $\mathfrak{pgl}(V)$ by $\mathbb{F}$ where $V$ is a $\mathbb{F}$-vector space, $\mu(k): v \mapsto kv$, the dialation map and $\pi$ is the canonical projection (see \cite{weibel}).
\end{remark}
\begin{theorem}\label{projrepplesken}
Consider the central extensions $0 \to \mathcal{L}(H) \xrightarrow{f} \mathcal{E} \xrightarrow{g} \mathcal{L}(G) \to 0$  and $0 \to  \mathbb{F} \xrightarrow{\mu} \mathfrak{gl}(V) \xrightarrow{\pi} \mathfrak{pgl}(V) \to 0$ of Lie algebras. Then for all pairs of Lie algebra homomorphisms $\Gamma : \mathcal{E} \to \mathfrak{gl}(V)$ and $\alpha : \mathcal{L}(H) \to \mathbb{F}$ such that $\mu \circ \alpha = \Gamma \circ f$, there is a projective representation $\Phi : \mathcal{L}(G) \to \mathfrak{pgl}(V)$ such that $\pi \circ \Gamma = \Phi \circ g$.

\[
\begin{tikzcd}
0 \arrow{r}&\mathcal{L}(H) \arrow{r}{f} \arrow{d}{\alpha} &
  \mathcal{E}   \arrow{r}{g} \arrow{d}{\Gamma} &
  \mathcal{L}(G) \arrow{r}    \arrow{d}{\Phi} &
  0
\\
0 \arrow{r}&\mathbb{F} \arrow{r}{\mu} &
 \mathfrak{gl}(V)      \arrow{r}{\pi} &
  \mathfrak{pgl}(V) \arrow{r} &
0
\end{tikzcd}
\]
\end{theorem}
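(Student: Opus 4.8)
The plan is to construct $\Phi$ by a diagram chase that produces a map on the quotient, exploiting that both rows are short exact sequences: $g$ is surjective with $\ker g = f(\mathcal{L}(H))$, and $\pi$ is surjective with $\ker \pi = \mu(\mathbb{F})$. This is precisely the situation of Lemma \ref{projlemma} applied with bottom row $0 \to \mathbb{F} \xrightarrow{\mu} \mathfrak{gl}(V) \xrightarrow{\pi} \mathfrak{pgl}(V) \to 0$ and vertical maps $\alpha$ and $\Gamma$, so I expect the whole argument to be a direct specialization of that lemma; the hypothesis $\mu \circ \alpha = \Gamma \circ f$ is exactly the commutativity of the left square needed to invoke it.

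First I would define $\Phi$ on $\mathcal{L}(G)$ as follows: given $x \in \mathcal{L}(G)$, since $g$ is surjective choose $e \in \mathcal{E}$ with $g(e) = x$, and set $\Phi(x) = (\pi \circ \Gamma)(e)$. The crucial point is well-definedness. If $g(e_1) = g(e_2)$, then $e_1 - e_2 \in \ker g = f(\mathcal{L}(H))$, so $e_1 - e_2 = f(\hat{h})$ for some $\hat{h} \in \mathcal{L}(H)$. Applying $\Gamma$ and using the hypothesis $\Gamma \circ f = \mu \circ \alpha$ gives $\Gamma(e_1) - \Gamma(e_2) = \mu(\alpha(\hat{h})) \in \mu(\mathbb{F}) = \ker \pi$, whence $(\pi \circ \Gamma)(e_1) = (\pi \circ \Gamma)(e_2)$. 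This is the single place where the hypothesis $\mu \circ \alpha = \Gamma \circ f$ enters, and I expect it to be the main (indeed the only) obstacle; everything else is formal. The defining relation $\pi \circ \Gamma = \Phi \circ g$ then holds by construction, since $(\Phi \circ g)(e) = (\pi \circ \Gamma)(e)$ for every $e \in \mathcal{E}$.

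Next I would verify that $\Phi$ is a Lie algebra homomorphism. Linearity is immediate, since $g$, $\pi$, and $\Gamma$ are all linear and representatives can be chosen additively: if $g(e_i) = x_i$ then $g(e_1 + e_2) = x_1 + x_2$ and $g(c e_1) = c x_1$. For the bracket, $g$ being a homomorphism gives $[x_1, x_2] = g([e_1, e_2])$, so
\[
\Phi([x_1,x_2]) = (\pi \circ \Gamma)([e_1,e_2]) = [(\pi \circ \Gamma)(e_1), (\pi \circ \Gamma)(e_2)] = [\Phi(x_1), \Phi(x_2)],
\]
using that both $\Gamma$ and $\pi$ preserve brackets.

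Finally, since $\Phi : \mathcal{L}(G) \to \mathfrak{pgl}(V)$ is a Lie algebra homomorphism into the projective Lie algebra $\mathfrak{pgl}(V)$, it is by definition a projective representation of $\mathcal{L}(G)$, and it satisfies $\pi \circ \Gamma = \Phi \circ g$ as required, completing the proof.
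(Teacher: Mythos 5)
Your proof is correct and follows essentially the same route as the paper, which simply invokes Lemma \ref{projlemma} (the induced map on the quotient $\mathcal{L}(G)\cong\mathcal{E}/\mathcal{L}(H)$). In fact your write-up is more complete: the well-definedness check --- that $\pi\circ\Gamma$ vanishes on $\ker g = f(\mathcal{L}(H))$, which is exactly where the hypothesis $\mu\circ\alpha=\Gamma\circ f$ enters --- is left implicit in the paper's proof of that lemma, and you supply it explicitly.
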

\begin{proof}
 The proof follows directly from the Lemma \ref{projlemma}.
\end{proof}

\par The projective representation $\Phi$ of $\mathcal{L}(G)$ on the space $V$ is called \textit{irreducible} if $0$ and $V$ are the only $\Phi$-invariant subspaces of $V$.
\par Now let $Irr(\mathcal{E})$ denotes the set of all irreducible linear representations of $\mathcal{E}$ and  $Irr^{\alpha}(\mathcal{L}(G))$ denotes the set of all irreducible $\alpha$-representations of $\mathcal{L}(G)$ where $\alpha \in H^2(\mathcal{L}(G), \mathbb{C})$. The subsequent theorem establishes the correspondence between these sets.

\begin{theorem}
Let $\mathcal{E}$  a finite dimensional Lie algebra  and $\mathcal{L}(H)$ be a central subalgebra of $\mathcal{E}$ such that $\mathcal{E}$ is a cover of $\mathcal{L}(G)$. Then there is a bijection between the sets $Irr(\mathcal{E})$ and $\displaystyle\bigcup_{[\alpha]\in H^2(\mathcal{L}(G), \mathbb{C})} Irr^{\alpha}(\mathcal{L}(G))$.
\end{theorem}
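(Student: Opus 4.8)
The plan is to build two mutually inverse maps, using Theorem \ref{projrepplesken} for the direction $Irr(\mathcal{E}) \to \bigcup_{[\alpha]} Irr^{\alpha}(\mathcal{L}(G))$ and Theorem \ref{linearrepcover} for the reverse, and then to verify that each carries irreducibility across. Starting with an irreducible $\Gamma \in Irr(\mathcal{E})$, I observe that since $\mathcal{L}(H)$ is a central subalgebra, every operator $\Gamma(f(\hat{h}))$ commutes with all of $\Gamma(\mathcal{E})$; as $V$ is finite-dimensional over the algebraically closed field $\mathbb{C}$, Schur's Lemma yields a linear map $\chi : \mathcal{L}(H) \to \mathbb{C}$ with $\Gamma(f(\hat{h})) = \chi(\hat{h}) I_V$, which is exactly the compatibility condition (the scalar embedding composed with $\chi$ equals $\Gamma \circ f$) demanded by Theorem \ref{projrepplesken}. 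That theorem then produces a projective representation $\Phi : \mathcal{L}(G) \to \mathfrak{pgl}(V)$ with $\pi \circ \Gamma = \Phi \circ g$; lifting $\Phi$ to $\tilde{\Phi} = \Gamma \circ s$ through $\mathfrak{gl}(V)$ along a section $s$ of $g$ exhibits it as an $\alpha$-representation with cocycle $\alpha = \chi \circ \beta = Tra(\chi)$. I send $\Gamma \mapsto ([\alpha], \Phi)$.

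For the reverse map I must know that every class $[\alpha]$ actually occurs, i.e.\ that $Tra$ is surjective; this is the one place the cover hypothesis is essential. Because $Ker(g) = \mathcal{L}(H) \subseteq [\mathcal{E}, \mathcal{E}]$, any Lie homomorphism $\mathcal{E} \to \mathbb{C}$ annihilates $[\mathcal{E},\mathcal{E}]$ and hence restricts to $0$ on $\mathcal{L}(H)$, so the restriction map in the Hochschild--Serre sequence vanishes and, by exactness, $Tra$ is injective; since $\mathcal{L}(H) \cong H^2(\mathcal{L}(G), \mathbb{C})$ by the second cover condition and $\mathcal{L}(H)$ is abelian, $\dim Hom(\mathcal{L}(H), \mathbb{C}) = \dim \mathcal{L}(H) = \dim H^2(\mathcal{L}(G), \mathbb{C})$, whence $Tra$ is an isomorphism. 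Thus given a pair $([\alpha], \Phi)$ with $\Phi$ an irreducible $\alpha$-representation, we have $[\alpha] \in Im(Tra)$, and Theorem \ref{linearrepcover} returns a homomorphism $\Gamma : \mathcal{E} \to \mathfrak{gl}(V)$ acting by scalars on $\mathcal{L}(H)$; this is the proposed inverse.

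Irreducibility passes both ways for the same reason: writing $\mathcal{E} = f(\mathcal{L}(H)) + s(\mathcal{L}(G))$ and noting that $\Gamma$ is scalar on $f(\mathcal{L}(H))$, a subspace $W \subseteq V$ is $\Gamma$-invariant exactly when it is invariant under $\Gamma(s(\mathcal{L}(G)))$, and the latter is precisely the condition that $W$ be $\Phi$-invariant, since all lifts of $\Phi$ differ by scalars that preserve every subspace. Hence $\Gamma$ is irreducible if and only if $\Phi$ is, so both maps restrict to the irreducible sets. Chasing the explicit formula $\Gamma(\hat{h} + s(\hat{y})) = \chi(\hat{h}) I_V + \Phi(\hat{y})$ from the proof of Theorem \ref{linearrepcover} against the construction of $\Phi$ from $\Gamma$ in Theorem \ref{projrepplesken} shows the two round trips are the identity once one passes to the relevant equivalence classes: linear equivalence on $Irr(\mathcal{E})$ and projective equivalence within each $Irr^{\alpha}$, with cohomologous cocycles identified via Theorem \ref{theorem}.

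I expect the main obstacle to be this last reconciliation of equivalences. Theorem \ref{projrepplesken} delivers $\Phi$ only as a map into $\mathfrak{pgl}(V)$, so $\Phi$ is determined just up to the scalar ambiguity $\Phi \mapsto \Phi + \delta I_V$, whereas Theorem \ref{linearrepcover} consumes an honest $\mathfrak{gl}(V)$-valued $\alpha$-representation; making the composites literally inverse therefore requires fixing, once and for all, a section $s$ and a cocycle representative in each cohomology class, and then checking---through Theorem \ref{theorem} and its converse---that different choices produce projectively equivalent representations identified in the target. This independence-of-choices bookkeeping, rather than any single computation, is the delicate part of the argument.
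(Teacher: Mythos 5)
Your proof follows the same basic route as the paper's: Theorem \ref{linearrepcover} sends an irreducible $\alpha$-representation $\Phi$ to $\Gamma(\hat{h}+s(\hat{y}))=\chi(\hat{h})I_V+\Phi(\hat{y})$, Theorem \ref{projrepplesken} goes back, and irreducibility transfers in both directions because the two actions differ on any vector only by scalars, which preserve every subspace --- this last step is essentially word-for-word the paper's argument. Where you go beyond the paper is in verifying the hypotheses of the two cited theorems, and both verifications are genuine additions. The paper applies Theorem \ref{projrepplesken} to an arbitrary $\Gamma\in Irr(\mathcal{E})$ without ever producing the required homomorphism $\alpha$ with $\mu\circ\alpha=\Gamma\circ f$; your appeal to Schur's Lemma (centrality of $\mathcal{L}(H)$ plus irreducibility over $\mathbb{C}$ forces $\Gamma\circ f$ to be scalar-valued) is exactly what is needed and is absent from the paper. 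Likewise, the paper applies Theorem \ref{linearrepcover} to every class $[\alpha]\in H^2(\mathcal{L}(G),\mathbb{C})$ even though that theorem assumes $[\mu]\in Im(Tra)$; your argument that $Res=0$ (because $Ker(g)\subseteq[\mathcal{E},\mathcal{E}]$ is killed by every character of $\mathcal{E}$), hence $Tra$ is injective by exactness of the Hochschild--Serre sequence, hence bijective by the dimension count $\dim Hom(\mathcal{L}(H),\mathbb{C})=\dim\mathcal{L}(H)=\dim H^2(\mathcal{L}(G),\mathbb{C})$, is precisely where the cover hypothesis enters and is likewise missing from the paper. Your closing caveat is also well taken: as literally stated (a bijection of sets of representations rather than of equivalence classes) the correspondence is only well defined up to the scalar ambiguity $\Phi\mapsto\Phi+\delta I_V$ and the choice of section $s$, reconciled via Theorem \ref{theorem}; the paper's proof silently elides this bookkeeping, so your version is the more defensible one.
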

\begin{proof}
\par Suppose that $\Phi$ is an irreducible $\alpha$-representation of $\mathcal{L}(G)$ on the vector space $V$. Then from Theorem \ref{linearrepcover}, we obtained a linear representation $\Gamma$ of $\mathcal{E}$ on $V$ and it is given by
\begin{center}
	$\Gamma (\hat{x}) = \Gamma(\hat{h}+s(\hat{y})) = \chi(\hat{h})I_V + \Phi(\hat{y})$
\end{center}
for $\hat{y} \in \mathcal{L}(G)$ and some linear map $\chi : \mathcal{L}(H) \to \mathbb{C}$.
\par Now it is enough to prove that $\Gamma$ is irreducible. For, assume that there is a non trivial proper subspace $W$ of $V$ which is $\Gamma$- invariant. That is, for all $\hat{x} \in \mathcal{E}$ and $w \in W$
\begin{equation}	
	\begin{split}
		\Gamma(\hat{x})(w) \in W & \Rightarrow (\chi(\hat{h})I_V + \Phi(\hat{y})(w) \in W \\
							   & \Rightarrow \chi(\hat{h})w + \Phi(\hat{y})(w) \in W \\
							   & \Rightarrow \chi(\hat{h})w + \Phi(\hat{y})(w) - \chi(\hat{h})w \in W \\
							   & \Rightarrow \Phi(\hat{y})(w) \in W \quad \text{ for all } \hat{y} \in \mathcal{L}(G)
	\end{split}
\end{equation}
That is, $\Phi$ is reducible, which is a contradiction. Therefore, $\Gamma$ is an irreducible linear representation of $\mathcal{E}$.
\par Conversely suppose that $\Gamma$ is an irreducible representation of $\mathcal{E}$. Then by Theorem \ref{projrepplesken}, there is a projective representation $\Phi : \mathcal{L}(G) \to \mathfrak{pgl}(V)$ and it is given by
\begin{center}
 $\Phi(\bar{x}) = \Gamma (x) + \mathcal{I}$ 
\end{center}
for some $x \in \mathcal{E}$, where $ \mathcal{I} = \{ k I_V : k \in \mathbb{F} \}$. 
\par To prove $\Phi$ is irreducible, suppose that $W$ be $\Phi$-invariant proper subspace of $V$. Then $\Phi(x)(w) \in W$ for every $x \in \mathcal{L}(G)$ and $w \in W$. \\
Now, for all $ w \in W$
\begin{equation}
	\begin{split}
			\Phi(\bar{x})(w) \in W  	&\Rightarrow (\Gamma (x) + \mathcal{I})(w) \in W \\
								&\Rightarrow \Gamma (x)(w) + kI_V(w) \in W\\
								&\Rightarrow  \Gamma (x)(w) + kI_V(w) - kI_V(w)\in W\\
								&\Rightarrow \Gamma(x)(w) \in W
	\end{split}
\end{equation}
That is, $\Gamma$ is reducible and it is a contradiction. Thus $\Phi$ is an irreducible projective representation of $\mathcal{L}(G)$.
\par Hence $\Phi \mapsto \Gamma$ gives the required bijection.
\end{proof}

\section*{Acknowledgment}
The first author thanks the Department of Science and Technology (DST) for the INSPIRE fellowship, which supported this work.

\end{document}